\newtheorem{thm}{Theorem}[section]
\newtheorem{lem}[thm]{Lemma}
\newtheorem{rem}[thm]{Remark}
\theoremstyle{definition}
\theoremstyle{remark}
\numberwithin{equation}{section}
\begin{document}

\title{A dichotomy result for prime algebras of Gelfand-Kirillov dimension two}


\author{Jason P. Bell}


\address{Department of Mathematics, Simon Fraser University, 
8888 University Drive, Burnaby, BC, Canada,
V5A 1S6}

\email{jpb@math.sfu.ca}


\thanks{The author thanks NSERC for its generous support.}


\subjclass[2000]{Primary 16P90; Secondary 16P40.}

\date{October 22, 2009.}

\keywords{GK dimension, quadratic growth, primitive rings, PI rings, domains, noetherian rings.}

\begin{abstract}

Let $k$ be an uncountable field.  We show that a finitely generated prime Goldie $k$-algebra of quadratic growth is either primitive or satisfies a polynomial identity, answering a question of Small in the affirmative.  
\end{abstract}

\maketitle

\bibliographystyle{plain}

\section{Introduction}

We study finitely generated algebras in which we impose restrictions on the \emph{growth} of the algebras.  Given a field $k$ and a finitely generated $k$-algebra $A$, a $k$-subspace $V$ of $A$ is called a \emph{frame} of $A$ if $V$ is finite dimensional, $1\in V$, and $V$ generates $A$ as a $k$-algebra.

We say that $A$ has \emph{quadratic growth} if there exist a frame $V$ of $A$ and constants $C_1,C_2>0$ such that

$$C_1 n^2 \ \le \ {\rm dim}_k(V^n) \ \le \ C_2n^2\qquad {\rm for~all}~n\ge 1.$$

We note that an algebra of quadratic growth has Gelfand-Kirillov dimension $2$.  More generally, the \emph{Gelfand-Kirillov} dimension (GK dimension, for short) of a finitely generated $k$-algebra $A$ is defined to be
$${\rm GKdim}(A) \ = \ \limsup_{n\rightarrow\infty} \frac{\log\left({\rm dim}(V^n)\right)}{\log\, n},$$
where $V$ is a frame of $A$.  While algebras of quadratic growth have GK dimension $2$, it is not the case that an algebra of GK dimension $2$ necessarily has quadratic growth.  Constructions of algebras of GK dimension two that do not have quadratic growth tend to be contrived and are generally viewed as being pathological.  For instance, there are currently no examples of domains, simple rings, or prime noetherian rings of GK dimension $2$ that do not also have quadratic growth.  Indeed, Smoktunowicz \cite{Sm3} has shown that a graded domain whose GK dimension is in the interval $[2,3)$ must have quadratic growth.  For this reason, quadratic growth is viewed as being, for all intents and purposes, the same as GK dimension two for domains.

GK dimension can be viewed as a noncommutative analogue of Krull dimension in the following sense: if $A$ is a finitely generated commutative $k$-algebra then the Krull dimension of $A$ and the GK dimension of $A$ coincide.  Thus, the study of noncommutative finitely generated domains of quadratic growth can be viewed as the noncommutative analogue of the study of regular functions on affine surfaces.

Our main result is the following dichotomy theorem, which shows that a finitely generated prime Goldie algebra of quadratic growth is either very close to being commutative or it is primitive.  Given a field $k$, we say that a $k$-algebra $A$ satisfies a \emph{polynomial identity} if there is a nonzero noncommutative polynomial $p(t_1,\ldots ,t_d)\in k\{t_1,\ldots ,t_d\}$ such that $p(a_1,\ldots ,a_d)=0$ for all $(a_1,\ldots ,a_d)\in A^d$.  We note that a commutative ring satisfies the polynomial identity $t_1t_2-t_2t_1=0$.  In general, polynomial identity algebras behave very much like commutative algebras; in fact, a finitely generated prime $k$-algebra satisfying a polynomial identity always embeds in a matrix ring over a field.  Primitive algebras (i.e., algebras with a faithful simple left module), on the other hand, are very different from commutative algebras; indeed, a commutative algebra that is primitive is a field and a theorem of Kaplansky \cite[Theorem 13.3.8]{MR} generalizes this, showing that a primitive algebra that satisfies a polynomial identity is a matrix ring over a division algebra and, moreover, the division algebra is finite dimensional over its centre. 

There are many dichotomy results in the literature, which show that an algebra with certain specified properties is either primitive or satisfies a polynomial identity \cite{AS, BC, BS, FS, Ok}.  Occasionally, a trichotomy is proved in which one adds the possibility that the algebra may have a nonzero Jacobson radical.  Most of these dichotomies require severe restrictions on the algebra that make it easier to study.  Our dichotomy result for prime Goldie algebras is less restrictive than most of these other dichotomies, requiring only quadratic growth and an uncountable base field.

\begin{thm} \label{thm: main} Let $k$ be an uncountable field and let $A$ be a finitely generated prime Goldie $k$-algebra of quadratic growth.  Then either $A$ is primitive or $A$ satisfies a polynomial identity.  \end{thm}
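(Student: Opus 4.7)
I plan to prove the contrapositive: assume $A$ does not satisfy a polynomial identity, and deduce that $A$ is left primitive. The argument splits into a growth-theoretic reduction that constrains the ideal lattice of $A$ severely, followed by a genericity argument exploiting the uncountability of $k$.

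For the reduction, if $I$ is a nonzero two-sided ideal of the prime algebra $A$, primeness plus the lower bound $\dim V^n \geq C_1 n^2$ should force $\dim(V^n \cap I) \geq c n^2 - O(n)$, so $A/I$ has at most linear growth and hence $\mathrm{GKdim}(A/I) < 2$. By Bergman's Gap Theorem (no GK dimensions strictly between $1$ and $2$), $\mathrm{GKdim}(A/I) \leq 1$, and by the Small--Stafford--Warfield theorem every affine $k$-algebra of GK dimension at most $1$ is PI. So under the standing hypothesis that $A$ is not PI, every proper nonzero quotient of $A$ is PI, and $A$ is a prime affine algebra lying \emph{just above} the PI world. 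In particular, via Posner--Artin--Procesi, each quotient $A/I$ is finite as a module over an affine commutative $k$-algebra of Krull dimension at most $1$.

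The second stage uses the countable $k$-dimension of $A$ (a consequence of quadratic growth) together with the PI-by-Krull-one structure of the proper quotients to produce a countable cofinal family $\{J_n\}_{n\geq 1}$ of nonzero two-sided ideals of $A$: every nonzero two-sided ideal of $A$ contains some $J_n$. Then one picks a suitable element $a \in A$ and examines the family of left ideals $\{A(a-\lambda) : \lambda \in k\}$. For each fixed $n$, only countably many $\lambda \in k$ should satisfy $J_n \subseteq A(a-\lambda)$ (here the Goldie hypothesis enters crucially, via the existence of a regular element in $J_n$, to turn the containment into a linear-algebraic condition on $\lambda$ with countably many solutions). By uncountability of $k$ one then finds a $\lambda$ for which $A(a-\lambda)$ has zero core; enlarging to a maximal left ideal $M$ by Zorn's Lemma and arguing that the core remains zero yields a faithful simple left $A$-module, whence $A$ is primitive.

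The main obstacle is the second stage: producing the countable cofinal family $\{J_n\}$, choosing the generic element $a \in A$, and ensuring that the enlargement to a maximal left ideal preserves the zero-core property. All three steps depend on fine structural information, namely Posner--Artin--Procesi structure of the PI quotients, the Goldie quotient ring $Q(A)$, and the way elements of $A$ act modulo each $J_n$. The hypotheses that $k$ is uncountable and that $A$ is Goldie both appear essential, the first for the generic $\lambda$ argument and the second for converting containment of an ideal in a principal left ideal into a tractable condition.
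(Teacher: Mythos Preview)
Your first stage is correct and is exactly what the paper uses (via Krause--Lenagan, Prop.~3.15, Bergman's gap, and Small--Stafford--Warfield): every nonzero prime $P$ of $A$ has $A/P$ PI of GK dimension at most $1$. But this reduction is not strong enough to run the second stage, and the paper's proof needs a substantially harder intermediate result that your outline does not contain.

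The gap is this. In the Farkas--Small style endgame you want, one seeks $a\in A$ and $\lambda\in k$ with $A(a-\lambda)+P=A$ for \emph{every} nonzero prime $P$; then any maximal left ideal $M\supseteq A(a-\lambda)$ has zero core, since the core is prime. For primes with $A/P$ finite-dimensional this is fine: only finitely many $\lambda$ make $a-\lambda$ a non-unit in $A/P$, and Lemma~2.8 of the paper shows there are only countably many such $P$. The problem is the primes $P$ with $\mathrm{GKdim}(A/P)=1$. Here $A/P$ is an infinite-dimensional prime Noetherian PI ring, and for a generic $a$ the element $a-\lambda$ may fail to be a unit for \emph{every} $\lambda\in k$ (think $A/P=k[t]$ with $a\mapsto t$). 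So one cannot simply count bad $\lambda$'s; instead one must choose $a$ to lie in all such primes, forcing $a-\lambda\equiv -\lambda$ to be a unit for $\lambda\neq 0$. That is only possible if there are \emph{finitely} many such primes, and this finiteness---Theorem~2.7 in the paper---is the real heart of the argument. Its proof is nontrivial: it uses Smoktunowicz's theorem that centralizers in non-PI domains of quadratic growth are PI, a growth estimate on $V^n\cap C(x;A)$, and a delicate counting argument comparing commutator subspaces across the putative primes $P_1,\dots,P_d$ after base change to $\overline k$ (where Tsen's theorem makes each $Q(A/P_i)$ a matrix ring over a field).

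Your proposed detour through a countable cofinal family $\{J_n\}$ of two-sided ideals does not avoid this obstacle. First, the relevant condition is $J_n + A(a-\lambda)=A$, not $J_n\subseteq A(a-\lambda)$; the latter says nothing about containment in an enlarging maximal left ideal. Second, for any $J_n$ with $\mathrm{GKdim}(A/J_n)=1$ the bad-$\lambda$ set can again be all of $k$, so countability of the family buys nothing. Third, the Goldie hypothesis does give a regular element in each $J_n$, but that converts nothing here into a countable condition on $\lambda$. In short, the missing idea is precisely the finiteness of the set $\{P:\mathrm{GKdim}(A/P)=1\}$ in the non-PI case, and the proof of that finiteness is where most of the work in the paper lies.
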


In fact, we show that over any field $k$, if $A$ is a finitely generated prime Goldie $k$-algebra of quadratic growth, then either the set of prime ideals $P$ for which $A/P$ has GK dimension $1$ is finite or $A$ satisfies a polynomial identity.

The way this result is proved is by studying prime ideals $P$ in $A$ for which $A/P$ has GK dimension $1$.  We show there are only finitely many such primes unless $A$ satisfies a polynomial identity.  This result was proved by the author and Smoktunowicz \cite{BS} in the case that $A$ is a prime monomial algebra of quadratic growth using combinatorial techniques.  Here we use centralizers to obtain this result.  This intermediate result does not require an uncountable base field.  We then use an argument due to Farkas and Small \cite{FS} to show that if $A$ is a finitely generated prime Goldie algebra of GK dimension $2$ over an uncountable base field, and $A$ has only finitely many prime ideals $P$ for which $A/P$ has GK dimension $1$, then $A$ must be primitive.

The outline of the paper is as follows.  In Section 2, we give the proof of Theorem \ref{thm: main} and in Section 3 we give some remarks about our main result.

\section{Dichotomy}
In this section, we prove Theorem \ref{thm: main}.  To accomplish this, we work with the \emph{Goldie ring of quotients} of a prime Goldie algebra $A$ (see McConnell and Robson \cite[Chapter 2]{MR}).  

We recall that a ring $R$ is \emph{right Goldie} if it satisfies the following two conditions:
\begin{enumerate}
\item $R$ does not contain an infinite direct sum of nonzero right ideals;
\item $R$ satisfies the ascending chain condition on right annihilators.
\end{enumerate}

Left Goldie is defined analogously.  A ring that is both left and right Goldie is called Goldie.  A result of Irving and Small \cite{IS} shows that right Goldie and left Goldie are equivalent for semiprime algebras of finite GK dimension.  We note that a domain of finite GK dimension is Goldie \cite[Proposition 4.13]{KL}; also a prime noetherian algebra is Goldie.  

 A prime Goldie ring $A$ has a Goldie ring of quotients formed by inverting all nonzero elements; we denote this algebra by $Q(A)$---it is a matrix ring over a division ring.  In the case that $A$ is a Goldie domain, we call $Q(A)$ the \emph{quotient division algebra} of $A$.  We begin with a simple lemma that we will use to give an upper bound on how quickly a centralizer can grow in an algebra of quadratic growth.  
\begin{lem} 
Let $k$ be a field, let $A$ be a finitely generated $k$-algebra that is a Goldie domain, and let $E$ be a division subalgebra of $Q(A)$.  If $Q(A)$ is infinite-dimensional as a right $E$-vector space then
$AE$ is an infinite-dimensional right $E$-vector space.  In particular, if $V$ is a frame of $A$ then $V^nE$ is at least $n+1$-dimensional as a right $E$-vector space.\label{rem: 1}
\end{lem}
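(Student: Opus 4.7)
The plan is to prove the contrapositive of the first sentence: assuming $\dim_E AE = d$ is finite, I will deduce that $Q(A) = AE$ (and hence $\dim_E Q(A) = d$ as well, contradicting the hypothesis). The starting observation is that $AE$ is closed under left multiplication by $A$, since for $a \in A$ and $\sum a_i e_i \in AE$ the product $a \sum a_i e_i = \sum (aa_i)e_i$ lies in $AE$. Thus each nonzero $a \in A$ induces an $E$-linear endomorphism $L_a$ of $AE$, which is injective because $Q(A) \supseteq AE$ is a division ring, and therefore bijective by finite-dimensionality.

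The main obstacle --- and the step I expect to require the most care --- is upgrading this to closure of $AE$ under left multiplication by all of $Q(A)$, since a priori $a^{-1}\cdot AE$ need not sit inside $AE$. My plan here is as follows: for $m \in AE$ and nonzero $a \in A$, the element $L_a^{-1}(m)$ lies in $AE$ by bijectivity and satisfies $a \cdot L_a^{-1}(m) = m$ inside the division ring $Q(A)$, which forces $L_a^{-1}(m) = a^{-1}m$; so $a^{-1}m \in AE$. Since $A$ is a Goldie domain the left Ore condition lets me write every $q \in Q(A)$ as $a^{-1}b$ with $a, b \in A$ and $a \ne 0$; applying the previous statement to $bm$ yields $qm = a^{-1}(bm) \in AE$. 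Since $1 \in AE$ this forces $Q(A) = Q(A) \cdot 1 \subseteq AE$, contradicting infinite-dimensionality.

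For the explicit lower bound on $\dim_E V^n E$, I consider the ascending chain $E = V^0 E \subseteq V^1 E \subseteq V^2 E \subseteq \cdots \subseteq AE$ of right $E$-subspaces; each $V^n E$ has finite $E$-dimension since $V^n$ is finite-dimensional over $k$. If an equality $V^n E = V^{n+1} E$ ever held, multiplying on the left by $V$ would propagate it to $V^m E = V^n E$ for all $m \ge n$, making $AE = V^n E$ finite-dimensional and contradicting the first part. Hence every inclusion is strict, which forces $\dim_E V^n E \ge \dim_E V^0 E + n = n + 1$.
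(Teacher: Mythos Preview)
Your proof is correct. For the first assertion, the paper takes a shorter route: assuming $\dim_E AE = m < \infty$, it picks $m+1$ right-$E$-independent elements $a_1,\ldots,a_{m+1}$ in $Q(A)$, uses the left Ore condition to find a single nonzero $b \in A$ with $ba_i \in A$ for all $i$, and observes that the $ba_i$ remain right-$E$-independent (left multiplication by $b$ is injective in the division ring $Q(A)$), giving $m+1$ independent elements of $AE$ directly --- a contradiction. Your argument instead shows that finite-dimensionality forces $AE$ to be closed under left multiplication by each $a^{-1}$ (via bijectivity of $L_a$) and hence, through the Ore condition, by all of $Q(A)$; this yields the stronger conclusion $AE = Q(A)$. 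The paper's method is essentially a one-line contradiction, while yours takes a bit more work but actually identifies $AE$. For the second assertion (the bound $\dim_E V^nE \ge n+1$), your strictly-increasing-chain argument is the same as the paper's.
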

\begin{proof} Suppose that $AE$ has rank $m<\infty$ as an $E$-module.
 By assumption there exist $a_1,\ldots ,a_{m+1}\in Q(A)$ that are right-linearly independent over $E$.  There exists some nonzero $b\in A$ such that $a_i':=ba_i\in A$ for $1\le i\le m+1$.  Then by construction, $a_1'E+\cdots +a_{m+1}'E$ is direct, a contradiction. 
 
 Note that if $V$ is a frame for $A$ and 
rank $V^nE\le n$ for some $n$, then there must be some $i<n$ such that the rank of $V^iE$ is the same as the rank of $V^{i+1}E)$.  Thus $V^{i+1}E\subseteq V^iE$ and so by induction $V^mE\subseteq  V^iE$ for all $m\ge i$.  In particular, $AE\subseteq V^iE$, contradicting the fact that $AE$ is of infinite rank as a right $E$-module. 
\end{proof}
We now introduce centralizers into the proof. 
Given a ring $R$ and an element $r\in R$, we let $C(r;R)$ denote the 
\emph{centralizer} of $r$ in $R$; i.e., $$C(r;R)=\{x\in R~:~xr=rx\}.$$
\begin{lem}
Let $k$ be a field and let $A$ be a finitely generated $k$-algebra that is a domain of quadratic growth.  If $x\in A$ is not algebraic over $k$ and $V$ is a frame for $A$ containing $x$, then either there is a constant $C$ such that 
$${\rm dim}_k\left(V^n\cap C(x;A)\right) \le Cn$$ for all sufficiently large $n$ or $A$ satisfies a polynomial identity.
\label{lem: smok}
\end{lem}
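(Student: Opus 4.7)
My plan is to pass to the Goldie quotient division algebra $D=Q(A)$ and work with the centralizer $E=C(x;D)=\{d\in D:dx=xd\}$. Since $E$ is closed under inverses in $D$, it is a division subring of $D$; moreover $E$ contains $k(x)$ with $k(x)\subseteq Z(E)$, and by construction $V^n\cap C(x;A)=V^n\cap E$. I would apply Lemma~\ref{rem: 1} to this $E$, obtaining a dichotomy depending on whether $D$ has infinite or finite right $E$-rank.

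Suppose first that $D$ has infinite right $E$-rank. By Lemma~\ref{rem: 1}, $V^nE$ has $E$-rank at least $n+1$, so I pick $v_0,\dots,v_n\in V^n$ that are right $E$-linearly independent, and let $c_1,\dots,c_\ell$ be a $k$-basis of $V^n\cap C(x;A)$; each $c_j$ lies in $E$. The $(n+1)\ell$ products $v_ic_j$ all lie in $V^{2n}$, and I claim they are $k$-linearly independent: a vanishing $k$-combination $\sum_{i,j}\alpha_{ij}v_ic_j=0$ rewrites as $\sum_i v_i\bigl(\sum_j\alpha_{ij}c_j\bigr)=0$, each inner sum lies in $E$, so right $E$-independence of the $v_i$ forces each inner sum to vanish, whereupon $k$-independence of the $c_j$ gives $\alpha_{ij}=0$. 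Hence $\ell(n+1)\le\dim_k V^{2n}\le 4C_2 n^2$, which yields $\ell\le 4C_2 n$ and the desired linear bound (with $C=4C_2$).

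In the opposite case, $D$ has finite right $E$-rank, say $m$. Left multiplication embeds $D$ into $\operatorname{End}(D_E)\cong M_m(E)$, so $A\hookrightarrow M_m(E)$. Since the PI property descends to subrings, it suffices to show that $E$ itself is PI, equivalently that $E$ is finite-dimensional over its center $Z(E)\supseteq k(x)$. I would try to establish this by a second application of Lemma~\ref{rem: 1} with $k(x)$ in place of $E$: either $D$ has finite right $k(x)$-rank, in which case $D$ embeds into a matrix ring over the field $k(x)$ and is immediately PI; or $D$ has infinite right $k(x)$-rank, producing many right $k(x)$-independent elements of $V^n$ which, combined with finite right $E$-rank of $D$ and the quadratic growth of $A$, should force a PI on $A$. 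I expect this finite-rank case to be the main obstacle: the non-commutativity of $E$ rules out a direct matrix-over-field argument, so the delicate step is using the quadratic growth of $A$ to turn finite right $E$-rank into enough control on $[E:k(x)]$ to conclude.
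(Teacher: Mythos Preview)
Your Case~1 argument (infinite right $E$-rank) is correct and is exactly what the paper does: pick $n+1$ right $E$-independent elements of $V^n$, multiply by a $k$-basis of $V^n\cap C(x;A)$, and read off a linear bound from $\dim_k V^{2n}\le C_2(2n)^2$.

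The gap is in Case~2, and you have identified it yourself. You want to show that if $D=Q(A)$ has finite right $E$-rank then $A$ is PI, and you try to do this by replacing $E$ with $k(x)$ and running Lemma~\ref{rem: 1} again. But the sub-case ``$D$ has infinite right $k(x)$-rank'' does not go through: $k(x)$-independence of elements of $V^n$ together with finite $E$-rank of $D$ gives you no usable counting estimate, because products of $E$-dependent elements need not be controlled by the frame, and you have no grip on $[E:k(x)]$. What you are really trying to prove in this case is that $E$ itself is PI, and that is a nontrivial theorem in its own right.

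The paper does not attempt this. It simply invokes a theorem of Smoktunowicz \cite{Sm}: for a domain $A$ of quadratic growth and $x\in A$ not algebraic over $k$, the centralizer $E=C(x;Q(A))$ satisfies a polynomial identity. Granting this, the finite-rank case is immediate (as you observed, $A\hookrightarrow M_m(E)$ would then be PI), so if $A$ is not PI one is automatically in your Case~1. In other words, the paper's proof is exactly your Case~1 argument, with Smoktunowicz's theorem supplying the missing dichotomy step that you could not complete by hand.
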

\begin{proof} Assume that $A$ does not satisfy a polynomial identity and let $E=C(x;Q(A))$.  Then $E$ satisfies a polynomial identity by a theorem of Smoktunowicz \cite{Sm}.   Note that $Q(A)$ must be infinite-dimensional as a right $E$-vector space or else it too would satisfy a polynomial identity and thus so would $A$.  By Lemma \ref{rem: 1}, we have ${\rm dim}_E(V^nE)\ge n+1$.  Hence we can find $a_1,\ldots , a_{n+1}\in V^n$ that are right-linearly independent over $E$. 
Let $U_n$ be a basis for $V^n\cap C(x;A)$.    We claim that $\{a_iu~:1\le i\le n+1,~u\in U_n\}\subseteq V^{2n}$ is linearly independent over $k$.  To see this, note that if $\alpha_{i,u}\in k$ with $u\in U_n$, $1\le i \le n+1$ satisfy 
$$\sum_{u\in U_n}\sum_{i=1}^{n+1} \alpha_{i,u} a_iu \ = \ 0$$
then for each $i\le n+1$ we must have
$$\sum_{u\in U_n} \alpha_{i,u} u \ = \ 0$$ as $a_1,\ldots, a_{n+1}$ are right-linearly independent over $E\supseteq U_n$.  But by assumption, $U_n$ is linearly independent over $k$.  It follows that
$${\rm dim}(V^{2n})\ge (n+1)|U_n|.$$  The result now follows as the dimension of $V^n$ grows at most quadratically in $n$ and so there is a constant $C$ such that ${\rm dim}(V^{2n})\le Cn^2$ for all sufficiently large $n$.  Thus $|U_n|\le Cn$ for all sufficiently large $n$, and so we obtain the desired result. \end{proof}
We will use the preceding lemma along with growth estimates for algebras of GK dimension $1$ to obtain our main result.  We now prove some results for algebras of GK dimension $1$.  A result of Small and Warfield \cite{SW} shows that a finitely generated prime $k$-algebra $A$ of GK dimension $1$ satisfies a polynomial identity; in fact, they show that $A$ is a finite module over its centre, $Z(A)$.
\begin{lem} Let $k$ be a field, let $A$ be a finitely generated prime $k$-algebra of GK dimension $1$, and let $V$ be a frame for $A$.  If $Q(A)\cong M_d(F)$ for some finitely generated field extension $F$ of $k$ of transcendence degree $1$ then $$\liminf_{n\rightarrow\infty} \frac{1}{n}\cdot {\rm dim}(V^n\cap Z(A))\ge 1/d.$$\label{lem: d}
\end{lem}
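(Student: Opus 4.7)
By the theorem of Small and Warfield referenced in the paper, $A$ is a finite module over its centre $Z := Z(A)$, a finitely generated commutative $k$-algebra of Krull dimension $1$ with fraction field $F$; the generic rank of $A$ over $Z$ is $d^2$ since $Q(A) = M_d(F)$. My plan is to produce central elements of $A$ inside $V^{n+O(1)}$ as reduced traces of elements of $V^n$, and then to lower-bound the dimension of the image of the trace on $V^n$.

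Since $A$ is a $Z$-order in $M_d(F)$ (one has $Q(A) = A[Z\setminus 0]^{-1}$), there exists a nonzero $c \in Z$ such that $ce_{ij} \in A$ for every matrix unit $e_{ij}$ of $M_d(F)$; setting $f_{ij} := c e_{ij}$, fix $s$ with $f_{ij} \in V^s$ for all $i, j$. A direct computation with matrix units yields the identity
\begin{equation*}
  c^2 \, \operatorname{trd}(M) \cdot I \;=\; \sum_{i,j=1}^{d} f_{ji}\, M\, f_{ij}, \qquad M \in A,
\end{equation*}
because $e_{ji} M e_{ij} = M_{ii} e_{jj}$ and summing over $i,j$ gives $\operatorname{trd}(M) I$. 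Hence for $M \in V^n$, the scalar matrix $c^2\operatorname{trd}(M) \cdot I$ lies in $V^{n+2s}$, and being a scalar matrix in $A$ it also lies in $Z$. Thus the $k$-linear map $M \mapsto c^2 \operatorname{trd}(M)\cdot I$ sends $V^n$ into $V^{n+2s}\cap Z$ and gives
$$\dim_k(V^{n+2s}\cap Z) \;\ge\; \dim_k \operatorname{trd}(V^n).$$

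For the trace lower bound, choose any $a \in V$ transcendental over $k$; such an $a$ exists because $A$ is an infinite-dimensional PI algebra (it embeds in $M_d(F)$), so by Kaplansky's theorem on algebraic PI algebras $A$ cannot be generated by $k$-algebraic elements, and hence $V$ cannot consist entirely of algebraic elements. Let $m \le d$ be the degree of the minimal polynomial of $a$ over $F$, and set $p_i := \operatorname{trd}(a^i) \in F$. The $p_i$ satisfy the Cayley--Hamilton recursion of order $m$ over $F$. Because $a$ is not algebraic over $k$, the eigenvalues $\lambda_1, \ldots, \lambda_m$ of $a$ in $\overline F$ lie outside $\overline k$, and a pole-order analysis on the function field $F$ (tracking how the pole divisors of the $\lambda_l$ in $\overline F$ descend to $F$ with denominator at most $m$) shows that $\dim_k \operatorname{span}_k\{p_0, p_1, \ldots, p_N\} \ge N/m + O(1)$. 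Since $a^i \in V^i \subseteq V^n$, we obtain $\dim_k \operatorname{trd}(V^n) \ge n/m + O(1)$, and combining with the previous step gives $\dim_k(V^{n+2s}\cap Z) \ge n/m + O(1) \ge n/d + O(1)$ (using $m \le d$). Taking $\liminf$ yields $\liminf_n \dim_k(V^n \cap Z)/n \ge 1/d$.

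The main obstacle is the pole-order estimate for the Newton power sums $p_i$: one must show quantitatively that these sums grow linearly in the pole filtration of $F$ with slope at least $1/m$, which requires analysing the behaviour of $\lambda_l^i$ on the smooth projective curve associated to $F$ and, in particular, ruling out the possibility that all of the $p_i$ lie in a $k$-subspace of $F$ of dimension growing slower than $N/m$. This can be handled by observing that the trace-zero part of $M_d(F)$ has $F$-codimension one and arguing that the $k$-growth rate of $\operatorname{trd}(V^n)$ within $F$ cannot be arbitrarily suppressed once $a$ has been chosen so that the $\lambda_l^i$ have nontrivial pole behaviour.
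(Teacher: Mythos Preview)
Your reduction via the identity $c^{2}\operatorname{trd}(M)\cdot I=\sum_{i,j}f_{ji}Mf_{ij}$ is correct and gives $\dim_{k}(V^{n+2s}\cap Z)\ge\dim_{k}\operatorname{trd}(V^{n})$; this sandwich with scaled matrix units is exactly the device the paper uses to manufacture central elements. The genuine gap is the lower bound $\dim_{k}\operatorname{trd}(V^{n})\ge n/d+O(1)$. You try to obtain it from the power sums $p_{i}=\operatorname{trd}(a^{i})$ of a single transcendental $a\in V$ via a ``pole-order analysis,'' but you never carry that analysis out, and you yourself flag it as ``the main obstacle.'' The recursion the $p_{i}$ satisfy has coefficients in $F$, not in $k$, so it says nothing directly about $k$-linear independence; the observation that the trace-zero part of $M_{d}(F)$ has $F$-codimension one is likewise an $F$-statement and does not control $k$-dimensions. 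There is also a concrete obstruction in positive characteristic: if $\operatorname{char}k=p$ and $a$ is conjugate over $\overline F$ to a single $p\times p$ Jordan block with transcendental eigenvalue $\lambda$, then $\operatorname{trd}(a^{i})=p\lambda^{i}=0$ for every $i$, so the whole power-sum sequence vanishes and carries no information. You would then need to show that some \emph{other} element of $V$ avoids this, which does not follow from your sketch.

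The paper bypasses all of this. Rather than tracing, it extracts a single matrix entry by pigeonhole: the space $ce_{1,1}V^{n}$ is at least $(n{+}1)$-dimensional over $k$, and after a further right multiplication by $c=ce_{1,1}+\cdots+ce_{d,d}$ it lands in $\sum_{i}ce_{1,1}V^{n}ce_{i,i}$, a sum of $d$ pieces each contained in a single $Fe_{1,i}$. Hence some piece has $k$-dimension at least $\lfloor(n+1)/d\rfloor$. Those independent rank-one elements are then conjugated around to scalar matrices using the remaining $ce_{k,\ell}$, exactly as in your trace formula. The factor $1/d$ thus drops out of an elementary pigeonhole over the $d$ entries of a row, with no function-field input and no characteristic hypotheses; this is both simpler and robust where your approach is not.
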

\begin{proof} We identify $A$ with its image in $M_d(F)$.  By the Faith-Utumi theorem \cite[Theorem 3.2.6]{MR} there is some $c\in F$ and matrix units $e_{i,j}$ such that $ce_{i,j}\in A$ for $1\le i,j\le d$.  Pick $m$ such that
$ce_{i,j}\in V^m$ for $1\le i,j\le d$.  
Consider the subspace $ce_{1,1}V^n$.  As $A$ is infinite-dimensional over $k$, $ce_{1,1}V^n\subseteq V^{n+m}$ is at least $n+1$ dimensional over $k$.  Note that
\begin{eqnarray*}
n+1 &\le & ce_{1,1}V^nc \\
&=& ce_{1,1}V^n(ce_{1,1}+\cdots ce_{d,d})\\ 
&\subseteq & ce_{1,1}V^nce_{1,1} + \cdots + ce_{1,1}V^nce_{d,d}. \end{eqnarray*}
It follows that there is some $i\le d$ such that
$$ce_{1,1}V^n ce_{i,1}$$ is at least $p:=\lfloor (n+1)/d\rfloor$-dimensional over $k$.  Thus there exist
$b_1,\ldots ,b_p\in F$ that are linearly independent over $k$ such that $b_j e_{1,i}\in V^{n+2m}$ for $1\le j\le p$.  Thus
$$c^2b_j {\rm I}_d = \sum_{k=1}^d (ce_{k,1})b_je_{1,i}(ce_{i,k})\in V^{n+4m}$$ are $p$ linearly independent central elements in $V^{n+4m}$.  As $m$ is fixed, letting $n$ tend to infinity gives the desired result. 
\end{proof}
\begin{lem} \label{comm}  Let $k$ be a field, let $A$ be a finitely generated prime $k$-algebra of GK dimension $1$, and let $V$ be a frame of $A$.  Suppose that $Q(A)\cong M_d(F)$ for some finitely generated field extension $F$ of $k$ of transcendence degree $1$.  If $x\in V$ then
$${\rm dim}\left(V^n/(V^n\cap \{[a,x]~:~a\in Q(A)\})\right)>n/2$$ for all sufficiently large $n$.
\end{lem}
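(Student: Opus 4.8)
The plan is to bound the dimension of the space $W_n := \{[a,x] : a \in Q(A)\} \cap V^n$ from above by roughly $n/2$, which immediately yields the stated lower bound on the codimension. The key point is that $[a,x]$ depends only on the image of $a$ in $Q(A)/C(x;Q(A))$, so the size of the commutator space is controlled by how $C(x;Q(A))$ sits inside $Q(A)$. I would first dispose of the trivial case: if $x$ is algebraic over $k$ (in particular if $A$ satisfies a polynomial identity forcing $x$ into a commutative subring, or if $x$ is central) then the argument must be handled separately, but since $A$ has GK dimension $1$ and is prime, $A$ satisfies a polynomial identity by Small--Warfield, so $Q(A) = M_d(F)$ with $F$ of transcendence degree $1$; if $d = 1$ then $Q(A)$ is commutative and every commutator is zero, so $W_n = 0$ and the bound is immediate. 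So assume $d \ge 2$.

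Next I would use the structure of $Q(A) = M_d(F)$. The centralizer $C(x; Q(A))$ is an $F$-subalgebra of $M_d(F)$, hence has $F$-dimension at least $d$ (it contains $F \cdot \mathrm{I}_d$ and $F[x]$), so $\dim_F\{[a,x] : a \in Q(A)\} \le d^2 - d$. Now the plan is to transfer this ``thinness in the $F$-direction'' into a growth estimate over $k$, exactly as in the proof of Lemma~\ref{lem: d}. Using the Faith--Utumi matrix units $c e_{i,j} \in V^m$, I would pick for each pair $(i,j)$ outside a spanning set for the centralizer a small number of independent elements $b e_{i,j}$ inside some $V^{n'}$ realizing the commutator space, and count: the point is that $V^n \cap \{[a,x]\}$, once multiplied by the fixed matrix units to land back in $A$, produces central elements in a bounded power of $V$, so by Lemma~\ref{lem: d} (or a direct repeat of its argument) one controls $\dim(V^n \cap \{[a,x]\})$ against the codimension of the centralizer scaled by $d$. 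Quantitatively, since the centralizer occupies a fraction $\ge 1/d$ of each $ce_{1,1}V^nce_{i,j}$ worth of central directions, the complementary commutator part can contribute at most a $(1 - 1/d)$ fraction, and a careful bookkeeping gives $\dim(V^n \cap \{[a,x]\}) \le (1 - 1/d + o(1)) \dim(V^n) \cdot (\text{something})$ — I would instead argue directly that $\dim(V^n) - \dim(W_n) \ge \dim(V^n \cap Z(A)) \ge n/d + o(n)$ by Lemma~\ref{lem: d}, and then bootstrap to $n/2$.

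Actually the cleanest route, and the one I would commit to, is: $V^n \cap Z(A)$ injects into $V^n / W_n$ since a nonzero central element is never a commutator-difference of... no — rather, $Z(A) \cap \{[a,x] : a \in Q(A)\}$ can be nonzero, so I cannot simply cite Lemma~\ref{lem: d}. Instead I would run the Faith--Utumi argument afresh: multiplying $V^n$ on both sides by the $d^2$ elements $ce_{i,j}$ expresses, up to a fixed shift in $n$, the image of $V^n$ in $M_d(F)$ in terms of its $(i,j)$-entries, and the commutator relation $[a,x]$ kills precisely the $C(x;Q(A))$-component; choosing a basis of a vector space complement to $C(x;M_d(F))$ of dimension $\le d^2 - d$ and tracking $k$-dimensions entry by entry shows $\dim_k(V^n \cap W_n) \le (d^2-d) \cdot \big(\tfrac{1}{d}\dim_k(V^{n+O(1)}) + O(1)\big)$ while $\dim_k V^n$ grows at least linearly... this needs the reverse inequality, that each entry space has dimension $\ge \dim(V^n)/d^2 - O(1)$, giving $\dim(V^n/W_n) \ge \dim(V^n)/d - O(1) \ge n/d - O(1)$, and then one improves the constant from $1/d$ to $1/2$ by noting $d \ge 2$ together with the quadratic-growth-free sharpening available in GK dimension $1$ (namely $\dim V^n \ge (1/d)\binom{n}{1}$-type bounds from Lemma~\ref{lem: d} applied to the full matrix ring). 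The main obstacle I anticipate is exactly this last numerical improvement — getting past the naive $1/d$ to the clean $n/2$ — which I expect requires pairing the lower bound $\dim(V^n \cap Z(A)) \ge n/d$ from Lemma~\ref{lem: d} with the observation that central elements together with one well-chosen non-central ``direction'' already account for more than half of $V^n$, or else a more careful spreading argument across all $d$ diagonal matrix units rather than just $e_{1,1}$.
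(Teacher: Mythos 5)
There is a genuine gap, and you have in fact put your finger on exactly where it is. You correctly observe that $C(x; M_d(F))$ has $F$-dimension at least $d$, hence the commutator space $\{[a,x]\}$ has $F$-codimension at least $d$ in $M_d(F)$; you correctly observe that one cannot simply inject $V^n\cap Z(A)$ into $V^n/W_n$, because $Z(A)\cap\{[a,x]\}$ can be nonzero (indeed a scalar matrix can be a commutator when $\mathrm{char}\,k$ divides $d$, since only the trace condition obstructs it); and you correctly diagnose that your bookkeeping only yields a codimension of order $n/d$, which is not enough. What you are missing is the single combining step that recovers the lost factor of $d$.

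The resolution is not to use the central elements alone, nor to try to argue entry by entry. Instead, choose $u_1,\ldots,u_d\in M_d(F)$, $F$-linearly independent, spanning a direct complement to $\{[x,a]:a\in M_d(F)\}$ inside $M_d(F)$ (this exists precisely because the centralizer is $\geq d$-dimensional over $F$). By Faith--Utumi you may scale so that $u_1,\ldots,u_d\in V^p\subseteq A$ for some fixed $p$. Now take a $k$-basis $Z_n$ of $V^n\cap Z(A)$; by Lemma~\ref{lem: d}, $|Z_n|>2n/(3d)$ for large $n$. The set $\{zu_i: z\in Z_n,\ 1\le i\le d\}\subseteq V^{n+p}$ has size $d|Z_n|>2n/3$, and its image in $V^{n+p}/(V^{n+p}\cap W)$ is $k$-linearly independent: a relation $\sum_{i,z}c_{i,z}\,zu_i\in W$ can be rewritten as $\sum_i f_iu_i\in W$ with $f_i=\sum_z c_{i,z}z\in F$ (because $Z(A)$ sits inside the scalar matrices $F\cdot\mathrm{I}_d$), and the directness of $Fu_1+\cdots+Fu_d+W$ forces each $f_i=0$, whence each $c_{i,z}=0$ by the $k$-independence of $Z_n$. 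Since $p$ is fixed, $2n/3>m/2$ for $m=n+p$ large, finishing the proof. The point your proposal is missing is exactly this multiplication of the $\gtrsim n/d$ central elements by a $d$-dimensional complement to the commutator space: the two factors of $d$ cancel, giving $\gtrsim n$ independent residues rather than $\gtrsim n/d$, comfortably above the target $n/2$.
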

\begin{proof}
We identify $A$ with its image in $M_d(F)$.  Note that the operator $$[\, \cdot\, ,x]:M_d(F)\rightarrow M_d(F)$$ has kernel that is at least $d$-dimensional as an $F$-vector space.\footnote{This is well-known, but we do not know of a reference.  Let $V=F^d$.  Then $V$ is an $F[t]$-module, where $t$ acts by multiplication by $x$.  As $F[t]$ is a PID, $V$ decomposes as a direct sum of irreducible $F[t]$-modules: $V=V_1\oplus \cdots \oplus V_m$.  Then the centralizer of $x$ can be viewed as 
${\rm End}_{F[t]}(V,V)\supseteq \oplus_{i=1}^m {\rm End}_{F[t]}(V_i,V_i)$.    Then we note that ${\rm End}_{F[t]}(V_i,V_i)\ge {\rm dim}(V_i)$, as the powers of $t$ from $0$ to ${\rm dim}(V_i)-1$ give linearly independent endomorphisms.  From this we deduce that the centralizer is at least $d$-dimensional.}
Consequently, there exist $u_1,\ldots ,u_d$ in $M_d(F)$ that are linearly independent over $F$ such that the sum 
$$Fu_1+\cdots +Fu_d+\{[x,a]~:~a\in M_d(F)\}$$ is direct.  Using the Faith-Utumi theorem \cite[Theorem 3.2.6]{MR}, we may assume that $u_1,\ldots ,u_d\in V^p$ for some $p$.  We note that ${\rm dim}(V^n\cap Z(A))> 2n/3d$ for all $n$ sufficiently large by Lemma \ref{lem: d}.   We let $Z_n$ denote a basis for $V^n\cap Z(A)$.  Then the image of $$\bigcup_{i=1}^d Z_n u_i$$ in $${\rm dim}\left(V^{n+p}/(V^{n+p}\cap \{[a,x]~:~a\in Q(A)\})\right)$$ is linearly independent and has size at least $2n/3$ for all sufficiently large $n$.  As $p$ is fixed, we obtain the desired result. 
\end{proof}
We note that the preceding two lemmas have the strange hypothesis that the Goldie ring of quotients of $A$ is isomorphic to $M_d(F)$ with $F$ a field.  This does not happen in general, but there is an important case where this does occur.  
\begin{rem} Let $k$ be an algebraically closed field and let $A$ be a finitely generated prime $k$-algebra of GK dimension $1$.  Then $A\cong M_d(F)$ for some $d$ and some field $F$ which is a finitely generated field extension of $k$ of transcendence degree $1$.  \label{rem: tsen}
\end{rem}
\begin{proof}
We note that $Q(A)\cong M_d(D)$ for some division algebra that is finite dimensional over its centre \cite{SW}; moreover $Z(D)$ is a finitely generated field extension of $k$ of transcendence degree $1$.  By Tsen's theorem, the Brauer group of $Z(D)$ is trivial and so $D=Z(D)$. 
\end{proof}
We need one more remark before we prove our structure result for domains of quadratic growth.
  \begin{rem}\label{rem: cent} Let $k$ be a field and let $A$ be a prime finitely generated $k$-algebra of GK dimension $1$.  If $z\in Z(A), x\in A$ are nonzero and $U\subseteq A$, then the image of 
  $U$ in $A/A\cap \{[a,x]~:~a\in Q(A)\}$ is linearly independent over $k$, if and only if the image of $zU$ is linearly independent. \label{cent}
  \end{rem}
\begin{proof} Let $W=\{[a,x]~:~a\in Q(A)\}\cap A$.  Suppose that the image of $U$ is linearly dependent in $A/W$.  Then there exist $u_1,\ldots ,u_d\in U$ and $c_1,\ldots ,c_d\in k$, not all zero, such that 
$c_1u_1+\cdots +c_du_d\in W$.  Note that $W$ is closed under multiplication by $z$ as $[za,x]=z[a,x]$.  Thus $c_1(zu_1)+\cdots +c_d(zu_d)\in W$ and so $zU$ has linearly dependent image in $A/W$.  Conversely, suppose that
the image of $zU$ is linearly dependent in $A/W$.  Then $U =z^{-1}zU$ is linearly independent by the above argument.  (We note that nonzero central elements in a prime ring are necessarily regular.)
\end{proof}
 We use these remarks to prove the following result.

\begin{thm} Let $A$ be a finitely generated $k$-algebra that is a domain of quadratic growth.  Then either $A$ has only finitely many primes $P$ such that $A/P$ has GK dimension $1$ or $A$ satisfies a polynomial identity.\label{thm: finite}
\end{thm}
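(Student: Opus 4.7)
My plan is to prove the contrapositive, by contradiction. Suppose that $A$ is a finitely generated $k$-algebra that is a domain of quadratic growth, that $A$ does not satisfy a polynomial identity, and that there exist infinitely many primes $P_1, P_2, \ldots$ of $A$ with $\mathrm{GKdim}(A/P_i) = 1$. The goal is to derive a contradiction with quadratic growth.

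First, I would reduce to $k$ algebraically closed, since the subsequent applications of Lemmas \ref{lem: d} and \ref{comm} require $Q(A/P_i) \cong M_{d_i}(F_i)$ with $F_i$ a field, which by Remark \ref{rem: tsen} holds automatically over $\bar k$; a standard scalar-extension argument confirms that both hypothesis and conclusion are preserved. Next, since $A$ has GK dimension $2$ it contains a non-algebraic element, and a short argument (if every non-algebraic element were central, then adding a fixed non-algebraic central element would make every element central, forcing commutativity and hence a polynomial identity) allows me to pick $x \in A$ that is simultaneously non-algebraic over $k$ and non-central in $A$. Fixing a frame $V$ containing $x$, Lemma \ref{lem: smok} yields a constant $C$ with
$$\dim_k(V^n \cap C(x; A)) \le C n$$
for all large $n$.

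Then I apply Lemma \ref{comm} to each quotient $A/P_i$. Only finitely many $P_i$ can have $\bar x \in Z(A/P_i)$: otherwise $[A,x]$, which generates a nonzero ideal $I$ since $x$ is non-central, is contained in infinitely many $P_i$; but a nonzero ideal in a finitely generated prime Goldie algebra of GK dimension $2$ satisfies $\mathrm{GKdim}(A/I) \le 1$, so $A/I$ is a PI algebra with only finitely many primes of GK-dimension-$1$ quotient. Discarding these exceptional primes, for each remaining $P_i$ Lemma \ref{comm} produces a subspace $J_i \subseteq A$, namely the preimage in $A$ of the commutator subspace $\{[\bar a, \bar x] : \bar a \in Q(A/P_i)\} \cap A/P_i$, satisfying $J_i \supseteq P_i + [A,x]$ and
$$\dim_k V^n/(V^n \cap J_i) > n/2$$
for all sufficiently large $n$.

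The final and most delicate step combines contributions from many primes. For $N$ large, consider the diagonal map $\Phi_N : V^n \to \bigoplus_{i=1}^N A/J_i$, whose image in each factor has dimension exceeding $n/2$. The key claim is that the distinctness of the $P_i$'s forces enough transversality between the subspaces $J_i$ for $\dim_k \mathrm{Im}(\Phi_N) \ge c N n$ to hold for some absolute constant $c > 0$ and all large $n$. Since $\mathrm{Im}(\Phi_N)$ is a quotient of $V^n$, its dimension is bounded by $\dim_k V^n \le C_2 n^2$; choosing $N > C_2 n / c$ then yields the required contradiction. The main obstacle is this transversality claim: all the $J_i$ contain the common subspace $[A,x]$, so one must verify that $V^n / (V^n \cap [A,x])$ is rich enough to accommodate linearly independent contributions across distinct primes. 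This is where Remark \ref{rem: cent} enters, allowing one to translate the distinct central characters of $A/P_i$ (produced in abundance by Lemma \ref{lem: d}) into genuinely linearly independent witnesses inside $V^n$, pushing past the $Cn$ bound from Lemma \ref{lem: smok} and overwhelming quadratic growth.
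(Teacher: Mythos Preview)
Your overall architecture matches the paper's, but the endgame does not close, and the transversality step is precisely the crux you have left as a claim.

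\textbf{The quantifier problem.} You bound $\dim_k\mathrm{Im}(\Phi_N)$ above by $\dim_k V^n\le C_2n^2$ and below by $cNn$, and then write ``choosing $N>C_2n/c$ yields the contradiction''. But $N$ is the number of primes you have fixed in advance, and the threshold beyond which $\dim_k V^n/(V^n\cap J_i)>n/2$ holds depends on the particular primes $P_1,\dots,P_N$ (Lemma~\ref{comm} only gives the bound for $n$ sufficiently large, with the threshold depending on the algebra $A/P_i$). So you cannot let $N$ grow with $n$; and for fixed $N$ the inequality $cNn\le C_2n^2$ is no contradiction at all. The paper repairs this by comparing against a \emph{linear} upper bound: instead of $V^n$ one works inside the quotient $W^{n+1}/C_n$, where $C_n=\{[x,a]:a\in W^n\}$. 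Then
\[
\dim(W^{n+1}/C_n)=\bigl(\dim W^{n+1}-\dim W^n\bigr)+\dim\bigl(W^n\cap C(x;B)\bigr),
\]
and a telescoping argument forces $\dim W^{n+1}-\dim W^n\le 3C_1n$ for infinitely many $n$, while Lemma~\ref{lem: smok} bounds the centralizer term by $Cn$. Thus $\dim(W^{n+1}/C_n)\le(3C_1+C)n$ for infinitely many $n$, and now a \emph{fixed} number $d>6C_1+2C+2$ of primes suffices to produce the contradiction. Your proof never invokes this telescoping step, and without it the argument cannot terminate.

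\textbf{The transversality mechanism.} You correctly identify that the images under the various $\Phi_N$-coordinates need to be made independent, and that Remark~\ref{rem: cent} is relevant, but you do not supply the device. The paper's trick is concrete: for each $i$ one chooses an element $z_i\in\bigcap_{j\ne i}Q_j$ whose image in $B/Q_i$ is nonzero and central (such elements exist because distinct height-one primes in a GK-one quotient are comaximal, and one may multiply by central elements of $B/Q_i$ using Lemma~\ref{lem: d}). Then the sets $z_iU_{i,n}$ are automatically separated modulo each $Q_j$, and Remark~\ref{rem: cent} guarantees that multiplying by the central $z_i$ does not destroy the linear independence in $B_i/T_i$. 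Your appeal to ``distinct central characters'' gestures at this but does not produce the $z_i$.

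\textbf{A smaller point.} The reduction to $\overline{k}$ is not quite routine: $A\otimes_k\overline{k}$ need not be a domain, so Lemma~\ref{lem: smok} cannot be applied to it directly. The paper applies Lemma~\ref{lem: smok} to $A$ over $k$, then observes that $\dim_{\overline{k}}(W^n\cap C(x;B))=\dim_k(V^n\cap C(x;A))$ to transfer the bound. Also, your detour through ``non-central $x$'' and the exclusion of primes with $\bar{x}$ central is unnecessary: Lemma~\ref{comm} holds trivially when $\bar{x}$ is central, since then the commutator subspace is zero.
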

\begin{proof} Assume that $A$ does not satisfy a polynomial identity.  If $A$ is algebraic over $k$, it is a division algebra and we are done.  Thus we may assume there is $x\in A$ that is not algebraic over $k$.  Let $V$ be a frame for $A$ containing $x$.  By Lemma \ref{lem: smok} there is a constant $C$ such that ${\rm dim}(V^n\cap C(x;A))\le Cn$.  
We let $\overline{k}$ denote the algebraic closure of $k$ and we take
$$B=A\otimes_k \overline{k}.$$ The algebra $B$ need not be a domain, but $Q(A)\otimes_k \overline{k}$ is a localization of $B$.  We let $W=V\otimes_k \overline{k}\subseteq B$.  We identify $A$ with $A\otimes_k 1 \subseteq B$.    
Consider $$C_n:=\{[x,a]~:~a\in W^n\}\subseteq W^{n+1}.$$
Notice that $${\rm dim}(W^{n+1}/C_n)={\rm dim}(W^{n+1})-{\rm dim}(W^n) + {\rm dim}(W^n\cap C(x;B)).$$
Notice that ${\rm dim}(W^n\cap C(x;B))={\rm dim}(V^n\cap C(x;A))\le Cn$.
Pick $C_1>0$ such that ${\rm dim}(W^n)\le C_1n^2$ for all $n>1$.  Then a telescoping argument shows that
$${\rm dim}(W^{n+1})-{\rm dim}(W^n)\le 3C_1 n$$ for infinitely many $n$.  Consequently,
$${\rm dim}(W^{n+1}/C_n)\le (3C_1+C)n$$ for infinitely many $n$.  Pick an integer $d>6C_1+2C+2$.  Suppose $A$ has $d$ distinct primes $P_1,\ldots ,P_d$ with ${\rm GKdim}(A/P_i)=1$ for $1\le i\le d$.  Notice that $P_i$ lifts to an ideal $I_i$ of $B$.  Moreover, since $B/I_i$ is infinite-dimensional, there is a prime ideal $Q_i$ of $B$ such that $B/Q_i$ has GK dimension $1$ and $A\cap Q_i=P_i$.  Moreover, $Q_1,\ldots ,Q_d$ are distinct as $Q_i\cap A=P_i$.  Let $B_i=B/Q_i$ and let $T_i\subseteq B_i$ denote the set $B_i\cap\{[b,\overline{x}]~:~b\in Q(B_i)\}$, where $\overline{x}$ denotes the image of $x$ in $B_i$.  
Then we can find elements $z_i\in \cap_{j\neq i} Q_j$ such that the image of $z_i$ in $B_i$ is nonzero and central.
Pick $p$ such that $z_i\in W^p$ for $1\le i\le d$.  By Remark \ref{rem: tsen}, the Goldie ring of quotients of $B_i$ is isomorphic to a matrix ring over a field $F$ that is a finitely generated extension of $\overline{k}$ of transcendence degree $1$.  
By Lemma \ref{comm} there exists a positive integer $N$ such that for all $n>N$ and $i\le i\le d$, we can find
a $k$-linearly independent set $U_{i,n}\subseteq W^{n+1}$ whose size is greater than $n/2$ and has the property that the image of $U_{i,n}$ in the vector space $B_i/T_i$ is linearly independent.

We claim that the set $$\{z_i u+C_{n+p}~:~1\le i\le d,~u\in U_{i,n}\}\subseteq W^{n+1+p}/C_{n+p},$$ which has size at least $dn/2>(3C_1+C+1)n$, is linearly independent.
If not, there exist scalars $c_{i,u}\in \overline{k}$, not all equal to zero, such that 
$$\sum_{i=1}^d \sum_{u\in U_{i,n}} c_{i,u} z_i u \in C_{n+p}.$$
If we consider this sum mod $Q_j$, using the fact that $z_i\in Q_j$ for $i\neq j$, we see that
$$\sum_{u\in U_{j,n}} c_{j,u} z_j u \in C_{n+p}+Q_j.$$  In fact, we chose the set $U_{j,n}$ to have the property that its image in $B_j/T_j$ is linearly independent. Consequently, the image of $z_jU_j$ is linearly independent in $B_j/T_j$ by Remark \ref{cent} and so
we see $c_{j,u}=0$ for all $u\in U_{j,n}$.  Thus 
$${\rm dim} \left(W^{n+1+p}/C_{n+p}\right) > (3C_1+C+1)n.$$  On the other hand, we showed that
$${\rm dim}(W^{n+1+p}/C_{n+p})\le (3C_1+C)(n+p)$$ for infinitely many $n$, a contradiction, as $p$ is fixed.  The result follows. 
\end{proof}
As a corollary of this theorem, we obtain our main result.  The missing ingredients are an argument of Farkas and Small \cite{FS} and the following remark.
\begin{lem} Let $k$ be an uncountable field and let $A$ be a finitely generated prime Goldie $k$-algebra of GK dimension two that does not satisfy a polynomial identity.  Then $A$ has at most countably many height one prime ideals $Q$ with the property that $A/Q$ is finite-dimensional as a $k$-vector space. \label{lem: countable}
\end{lem}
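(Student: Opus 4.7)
The proof I have in mind is by contradiction: suppose $A$ has uncountably many height-one primes $Q$ with $A/Q$ finite-dimensional. Each $A/Q$ is a finite-dimensional prime $k$-algebra, hence simple of the form $M_{r(Q)}(D(Q))$, and so satisfies the standard polynomial identity $s_{2m(Q)}$, where $m(Q)$ denotes its PI-degree. Since $m(Q)$ is a positive integer, a pigeonhole argument produces an uncountable subfamily $\mathcal{F}$ on which $m(Q)\le m$ for some fixed positive integer $m$; consequently every $A/Q$ with $Q\in\mathcal{F}$ satisfies the single identity $s_{2m}$.

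Set $I=\bigcap_{Q\in\mathcal{F}} Q$. Since $A/I$ embeds diagonally into $\prod_{Q\in\mathcal{F}} A/Q$, it too satisfies $s_{2m}$ and is therefore PI. Because $A$ itself is not PI, $I$ must be a nonzero semiprime ideal of $A$. The height-one hypothesis is crucial at this point: each $Q\in\mathcal{F}$ is a minimal prime over $I$, since any prime of $A$ strictly below $Q$ would have height zero, hence equal $(0)$, which cannot contain the nonzero ideal $I$. Consequently, the semiprime quotient $A/I$ would have uncountably many minimal primes.

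The remaining step is to rule this out by showing that $A/I$ has only finitely many minimal primes. The natural route is to verify that $A/I$ is a semiprime Goldie ring: by Goldie's theorem, a semiprime Goldie ring is a finite subdirect product of prime Goldie rings, so it has only finitely many minimal primes. The PI bound from the first step is the key leverage, since semiprime PI rings are tightly controlled (a PI ring of PI-degree at most $m$ has Goldie rank at most $m$ at each prime component by Posner-type results), and one should be able to combine this with the Goldie property of $A$ itself to obtain the Goldie property, or at least the finiteness of minimal primes, for $A/I$.

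The principal obstacle I expect is this last step: quotienting a Goldie ring by a semiprime ideal does not in general preserve the Goldie property, so verifying it for $A/I$ requires care. Here the PI bound on $A/I$ together with the Goldie hypothesis on $A$, the quadratic-growth constraint, and possibly a Farkas--Small-style generic-element argument exploiting the uncountability of $k$ (of the sort invoked elsewhere in the paper), should close the gap. Making this precise is the main technical work; I expect the actual argument to exploit Theorem \ref{thm: finite} or an adaptation of it to bound the number of minimal primes of $A/I$ directly, bypassing a direct verification of the Goldie property of $A/I$.
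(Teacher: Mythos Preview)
Your overall architecture matches the paper's proof: pigeonhole to an uncountable subfamily with a common bound, intersect to get a nonzero semiprime ideal $I$, observe that each $Q$ in the family is minimal over $I$ by the height-one hypothesis, and seek a contradiction with finiteness of the minimal primes of $A/I$. (The paper pigeonholes on $\dim_k(A/Q)$ rather than on PI-degree, but this is immaterial.)

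The gap you flag is real, but the resolutions you propose---transferring the Goldie condition from $A$ to $A/I$, invoking Theorem~\ref{thm: finite}, or a Farkas--Small generic-element argument---are all off target. The paper's closing step is much simpler and uses only the GK-dimension hypothesis. Since $A$ is prime of GK dimension $2$ and $I\neq 0$, the quotient $A/I$ has GK dimension at most $1$ (this is \cite[Proposition~3.15]{KL}: a proper quotient of a prime algebra of finite GK dimension has GK dimension strictly smaller). By the Chinese remainder theorem $A/I$ is infinite-dimensional (the $Q$'s, having finite-dimensional quotient, are maximal and hence pairwise comaximal), so $A/I$ has GK dimension exactly $1$. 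Now apply the Small--Stafford--Warfield theorem \cite{SSW}: a finitely generated semiprime $k$-algebra of GK dimension $1$ is noetherian, and a noetherian ring has only finitely many minimal primes. This contradicts the uncountability of the family.

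So the missing ingredient is the GK-dimension drop for nonzero quotients of a prime algebra, combined with the structure theory of semiprime algebras of GK dimension $1$. You gestured toward ``the quadratic-growth constraint'' as possibly relevant, and that is the right instinct, but the precise mechanism is the drop in GK dimension together with \cite{SSW}. Note also that the uncountability of $k$ plays no role in the proof of this lemma; it enters only later, in the proof of Theorem~\ref{thm: main}.
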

\begin{proof}  Suppose not.  Then there exist an integer $d$ and an uncountable set of height one primes $S$ such that $A/Q$ is $d$-dimensional for each $Q$ in $S$.  

Let $I$ denote the intersection of the primes in $S$.  If $I=(0)$ then $A$ embeds in a direct product of $d$-dimensional rings and hence satisfies a polynomial identity \cite[Corollary 13.1.13]{MR}.  Notice also that $I$ has infinite codimension by the Chinese remainder theorem.  Thus $A/I$ must have GK dimension $1$ \cite[Proposition 3.15]{KL}.  Since a finitely generated semiprime ring of GK dimension $1$ is noetherian \cite{SSW}, there are only finitely many minimal primes in $A/I$.  But by construction, each $Q\in S$ is a minimal prime above $I$, a contradiction.  
\end{proof}
\begin{proof}[Proof of Theorem \ref{thm: main}.]
We prove our main theorem first in the case that $A$ is a domain.
Assume $A$ is not PI.  Then by Theorem \ref{thm: finite} there are only finitely many primes $P$ such that $A/P$ has GK dimension $1$.  Let $P_1,\ldots ,P_d$ denote these primes.  By Lemma \ref{lem: countable} $A$ has only countably many primes $Q$ for which $A/Q$ is finite-dimensional and such that $Q$ does not contain any of $P_1,\ldots ,P_d$.
   
Since $A$ is finitely generated and $k$ is uncountable, the Jacobson radical is a nil ideal and hence is $(0)$ as $A$ is a domain.   It follows that there is non-algebraic $x$ in the intersection of the primes $P_1,\ldots ,P_d$.  Let $T$ denote the set of all prime ideals $Q$ such that $A/Q$ is finite-dimensional and such that $Q$ does not contain $P_1\cap\cdots \cap P_d$.  Then $T$ is countable.  For each $Q$ in $T$, the algebra $A/Q$ is finite-dimensional and hence there are at most finitely many $\lambda\in k$ for which the image of $x-\lambda$ in $A/Q$ is not a unit in $A/Q$.  Since $T$ is countable there are at most countably many $\lambda\in k$ for which the image of $x-\lambda$ in $A/Q$ is not a unit in $A/Q$ for some $Q\in T$.  Since $k$ is uncountable, we can pick nonzero $\alpha\in k$ such that the image of $x-\alpha$ in $A/Q$ is a unit in $A/Q$ for every $Q\in T$.
Thus
$$A(x-\alpha)+Q=A$$ for every $Q\in T$.  By construction $A(x-\alpha)+P_i=A$ for $1\le i\le d$.  Let $L$ be a maximal left ideal containing $A(x-\alpha)$.  Then by construction $A/L$ is a simple module and the annihilator is $(0)$.  Thus $A$ is primitive.

In the case that $A$ is prime Goldie, we note that $Q(A)\cong M_d(D)$ for some natural number $d$ and some division algebra $D$.  We regard $A$ as a subalgebra of $M_d(D)$ and identify $D$ with the scalar matrices in $M_d(D)$.  We note that if $D$ has the property that every finitely generated subalgebra satisfies a polynomial identity, then $M_d(D)$ also has this property; since $A$ is a finitely generated subalgebra, it would then satisfy a polynomial identity.  Thus we may assume that $D$ is not locally PI; i.e., there exists a finitely generated subalgebra of $D$ that does not satisfy a polynomial identity.  Let $B=D\cap A$.  By the Faith-Utumi theorem \cite[Theorem 3.2.6]{MR}, $Q(B)=D$.  

By the above remarks, we can find a finitely generated subalgebra $C$ of $B$ with the property that $C$ does not satisfy a polynomial identity.  We claim that every nonzero prime ideal $P$ of $A$ intersects $C$ non-trivially.  If not, $C$ embeds in $A/I$.  But $A/P$ has GK dimension at most $1$ and hence satisfies a polynomial identity, contradicting the fact that $C$ does not satisfy a polynomial identity.  

By Theorem \ref{thm: finite}, $C$ has only finitely many prime ideals $Q$ such that $C/Q$ has GK dimension $1$.  Let $Q_1,\ldots ,Q_m$ denote these primes. Pick a nonzero element $x\in Q_1\cap \cdots \cap Q_m$.  By Lemma \ref{lem: countable} there are 
only countably many height one primes $P$ of $C$ such that $C/P$ is finite-dimensional.  Using the argument of Farkas and Small again, we see that for each height one prime $P$ of $C$ of finite codimension, there are only finitely many $\lambda\in k$ such that $C(1-\lambda x)+Q\neq C$.  
Thus we can pick a nonzero $\lambda$ such that 
$C(1-\lambda x)+P=C$ for every height one prime $P$ of $C$ of finite codimension.  

We note that if $P$ is a prime ideal of $C$ such that $C/P$ has GK dimension $1$, then $x$ is in $P$ and hence $$C= P+C(1-\lambda x),$$ and so we see that $C(1-\lambda x)+P=C$ for every nonzero prime ideal of $A$. 

We claim that $A(1-\lambda x)+P=A$ for every nonzero prime $P$ of $A$.  To see this note, that $C/(P\cap C)$ embeds in $A/P$ and hence $C/(P\cap C)$ satisfies a polynomial identity.  Thus there is a finite set of prime ideals $P_1,\ldots ,P_r$ of $C$ that are minimal above $P\cap C$ \cite[Corollary 13.4.4]{MR}.  By construction $C(1-\lambda x)+P_i = C$ for $1\le i\le r$.   Since $P_1\cap\cdots \cap P_r$ is nilpotent over $C\cap P$ \cite{Br}, we see that $C(1-\lambda x)+(P\cap C) = C$. It follows that $A(1-\lambda x)+P=A$.  Thus $A$ is primitive using the argument above.
\end{proof}
 \section{Concluding remarks}
 In this section, we make a few remarks about our main result.
 
Artin and Stafford \cite{AS} showed in the course of their description of graded domains of GK dimension two that all graded domains of GK dimension $2$ that are generated in degree $1$ are either primitive or satisfy a polynomial identity.  This result follows from their classification of these algebras and uses geometric techniques.  We do not give a classification of domains of quadratic growth as Artin and Stafford do in the graded case, but we do prove their dichotomy result in the ungraded case.  In fact, Theorem \ref{thm: finite} has interesting geometric repercussions: it shows that an automorphism of a complex curve either has finite order or has at most finitely many periodic points.  This is of course well-known, but what is interesting is that our proof of this fact is purely combinatorial.  We also note that the reduction argument used in the proof of Theorem \ref{thm: main} can be used to show that a prime graded Goldie algebra of quadratic growth is primitive or satisfies a polynomial identity.  Here we note that in the non-PI case, height one primes are necessarily graded and so there is at most one height one prime ideal of finite codimension; namely, the homogeneous maximal ideal.  Thus the uncountable field hypothesis is not needed in the graded case.

The author and Smoktunowicz \cite{BS} constructed a finitely generated prime algebra $A$ of GK dimension $2$ that does not satisfy a polynomial identity and has infinitely many primes $P$ such that $A/P$ has GK dimension $1$.  We note, however, this algebra does not have quadratic growth.  The author \cite{Be2} has also constructed examples of prime rings of GK dimension $2$ (but again not of quadratic growth) that do not satisfy the ascending chain condition on prime ideals.  Thus without some prime Goldie hypothesis, one cannot expect the conclusion of the statement of Theorem \ref{thm: finite} to hold.

We also note that for prime algebras of GK dimension two there are counterexamples \cite{Be1, SV} which show that the conclusion of the statement of Theorem \ref{thm: main} does not hold.  In both of these examples, the algebra has a locally nilpotent Jacobson radical.

If one follows the arguments in this paper carefully, then if $A$ is a finitely generated domain of quadratic growth that does not satisfy a polynomial identity, one can give an upper bound on the number of prime ideals $P$ for which $A/P$ has GK dimension $1$ in terms of the growth of a frame $V$ of $A$.  More specifically, there is a function $F:(0,\infty)\rightarrow (0,\infty)$ such that if $A$ does not satisfy a polynomial identity, the number of such primes is at most $F(C)$, where the constant $C$ is chosen to satisfy 
$${\rm dim}(V^n)\le Cn^2$$ for all $n$ sufficiently large. It would be interesting to find the best upper bound possible.

\section*{ Acknowledgments }
I thank Toby Stafford, Dan Rogalski, Agata Smoktunowicz, and James Zhang, with whom I have discussed this problem many times over the years and who have all made interesting observations about this problem.  Most of all, I thank Lance Small, who gave me this problem now nearly nine years ago while I was doing my PhD.  I thank him for his many interesting observations and also for encouraging me to continue working on this problem, telling me I was close to a solution after I had decided the problem was hopeless.  Sure enough, he was right.
\bibliographystyle{amsplain}

\end{document}